\newcommand{\R}{{\mathbb R}} 
\newtheorem{theorem}{Theorem}
\newtheorem{corollary}[theorem]{Corollary}
\newtheorem{lemma}{Lemma}
\theoremstyle{plain}
\begin{document}
\title[Functional-differential equations with Stolarsky means]
{On functions whose mean value abscissas are Stolarsky means of endpoints}
\author{Zarif O.~Ibragimov\;}
\address{Mathematisches Institut, Universit\"at Bern\\
	Sidlerstr. 5\\
	CH-3012 Bern, Switzerland}
\email{zarif.ibragimov@math.unibe.ch}
\author{\;Farrukh Mukhammadiev}
\address{Facult\'e des HEC, Universit\'e de Lausanne\\
	Internef, Quartier de Chamberonne\\
	CH-1015 Lausanne, Switzerland}
\email{farrukh.mukhammadiev@gmail.com}

\begin{abstract}
We provide a new approach to determine all differentiable 
functions whose mean value abscissas are Stolarsky means of 
endpoints.
\end{abstract}
\maketitle

\section{Introduction and the main result}

\bigskip

Let $I\subset\R$ be an open interval and let $f\colon I\to\R$ be a differentiable function. For any finite interval $(a,b)\subset I$, 
Lagrange's Mean Value Theorem guarantees the existence of $c\in (a,b)$ such that the tangent to the graph of $f$ at the point $(c, f(c))$ 
is parallel to the secant through its endpoints $(a, f(a))$ and $(b, f(b))$.

It is one of the many beautiful properties of the parabolas with horizontal directrix that the mean value in the Lagrange Mean Value Theorem always corresponds 
to the arithmetic mean of the endpoints. That is, we have
\begin{equation}\label{prop.parabola}
f(b)-f(a)=(b-a)f'\Bigl(\frac{a+b}{2}\Bigr) 
\quad \text{for all} \quad a,b\in\R,
\end{equation}
if $f$ is any quadratic polynomial. It is then a natural question to ask whether quadratic polynomials are the only members of the 
vector space of differentiable functions having the property~\eqref{prop.parabola}. More generally, by asking for which $f$ the mean 
value $c$ in Lagrange's Mean Value Theorem depends on the endpoints in a certain given manner, one arrives at so-called functional-differential equations.

It was Haruki~\cite{Haruki} and Acz\'el~\cite{Aczel} who showed independently that the quadratic polynomials are the only differentiable functions that 
solve~\eqref{prop.parabola}. The functional-differential equation \eqref{prop.parabola} was one of the starting points for the rich literature 
devoted to various and more general functional equations including the ones for higher order Taylor expansions and also in the abstract setting of groups. 
To name just a few, we mention \cite{Sablik}, \cite{Kannappan}, \cite{Ebanks}, \cite{Pales}, \cite{BIM} and also \cite{Sahoo-Riedel} for a textbook 
reference.

In this note we are interested in the case when the mean value in Lagrange's Mean Value Theorem corresponds to the so-called \emph{Stolarsky mean} of end-points. 
More precisely, we solve the functional-differential equation
\begin{equation}\label{FDE.stolyar}
f(b)-f(a)=(b-a)f'(S_\alpha(a,b)), \quad a,b>0,
\end{equation}
where, for $\alpha\in(-\infty,\infty)$, 
$S_{\alpha}:(0,\infty)^2\to (0,\infty)$ stands for the 
Stolarsky mean which is a continuous function defined 
by
\begin{equation*}\label{Stolarsky.mean}
S_\alpha(a,b)=
	\begin{cases}
	\displaystyle
		\Bigl(\frac{b^{\alpha}-a^{\alpha}}{\alpha(b-a)} 
		\Bigr)^{\frac{1}{\alpha-1}} & \text{if } 
	\alpha\notin\{0, 1\},\\[1.5ex]
	S_0(a,b) & \text{if } \alpha=0,\\[1ex]
	S_1(a,b) & \text{if } \alpha=1,\\
	\end{cases}
\end{equation*}
for all $a, b>0$ with $a\neq b$, and $S_\alpha(a,a):=a$ 
for all $a>0$, where
\begin{equation*}\label{S0andS1}
S_0(a,b):=
\frac{b-a}{\log b-\log a}, \quad
S_1(a,b):=
\frac{1}{e}\exp \bigg(\frac{b\log b-a \log a}{b-a}\bigg),
\end{equation*}
see e.g.~\cite{Stolarsky}. 
For example, the case $\alpha=2$ corresponds to the 
arithmetic mean. 

Our main result reads as follows.

\begin{theorem}\label{thm:main}
Let $\alpha\in\R$ be arbitrary and let $f\colon(0,\infty)\to\R$ be 
a differentiable function satisfying~\eqref{FDE.stolyar}.
\begin{enumerate}[\upshape (i)]
	\item If $\alpha\notin\{0,1\}$, then there exist constants 
	$c_1, c_2, c_3\in\R$ such that
	\begin{equation}\label{sol.case.1}
		f(x)=c_1x^\alpha+c_2x+c_3, \quad x>0.
	\end{equation}
	\item If $\alpha\in\{0,1\}$, then there exist constants 
	$c_1, c_2, c_3\in\R$ such that
	\begin{equation}\label{sol.case.2}
	f(x)=c_1x^\alpha\log(x)+c_2x+c_3, \quad x>0.
	\end{equation}
\end{enumerate}
\end{theorem}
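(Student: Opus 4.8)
The plan is to establish the theorem in two halves. Sufficiency---that every function of the form \eqref{sol.case.1} or \eqref{sol.case.2} solves \eqref{FDE.stolyar}---follows by a direct substitution once one records the defining identity $S_\alpha(a,b)^{\alpha-1}=\bigl(b^\alpha-a^\alpha\bigr)/\bigl(\alpha(b-a)\bigr)$ (and the analogous identities for $S_0$, $S_1$), together with linearity, so that it is enough to check it on the basis functions $x^\alpha,x,1$ (respectively $x^\alpha\log x,x,1$). The substance is the converse: any differentiable solution $f$ of \eqref{FDE.stolyar} lies in that three-parameter family. For this I would proceed in four steps: (a) bootstrap $f$ from differentiable to $C^3$; (b) expand \eqref{FDE.stolyar} along the diagonal $b=a+\varepsilon$, $\varepsilon\to0$, to obtain a linear third-order ODE for $f$; (c) solve the ODE; (d) observe that its solution space is exactly the asserted family. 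A pleasant feature is that steps (a)--(c) run uniformly in $\alpha$, and the split $\alpha\in\{0,1\}$ versus $\alpha\notin\{0,1\}$ only surfaces at the final integration in (c).

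For step (a) I would use that $S_\alpha$ is homogeneous of degree one, strictly increasing in each variable and $C^\infty$ off the diagonal, and that for each fixed $c>0$ the map $b\mapsto S_\alpha(c,b)$ is a $C^\infty$-diffeomorphism of $(c,\infty)$ onto an interval $(c,L)$ with $L=\sup_{b}S_\alpha(c,b)=+\infty$ (this last point, and the analogous ones for $S_0$, $S_1$, is checked by inspecting the behaviour as $b\to\infty$ according to the signs of $\alpha$ and $\alpha-1$, but holds in every case). Fixing $x_0>0$ and setting $c:=x_0/2$, for $x$ near $x_0$ we may write $x=S_\alpha\bigl(c,b(x)\bigr)$ with $b(\cdot)$ the smooth local inverse, so that \eqref{FDE.stolyar} becomes
\[
f'(x)=\frac{f\bigl(b(x)\bigr)-f(c)}{b(x)-c}\qquad\text{near }x_0 .
\]
Since $b(\cdot)\in C^\infty$ and $b(x)-c$ does not vanish there, the right-hand side is $C^k$ whenever $f\in C^k$; hence $f\in C^k\Rightarrow f\in C^{k+1}$, and starting from $f\in C^0$ we obtain $f\in C^\infty(0,\infty)$---in particular $f\in C^3$.

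For step (b), a computation with the binomial series gives, uniformly in $\alpha\in\R$ (the values $\alpha=0,1$ checked separately from the formulas for $S_0$, $S_1$),
\[
S_\alpha(a,a+\varepsilon)=a+\frac{\varepsilon}{2}+\frac{\alpha-2}{24}\,\frac{\varepsilon^{2}}{a}+O(\varepsilon^{3}),\qquad\varepsilon\to0 .
\]
Inserting this into \eqref{FDE.stolyar} and expanding both sides to order $\varepsilon^{3}$, the coefficients of $\varepsilon^{0},\varepsilon^{1},\varepsilon^{2}$ agree identically, while the coefficient of $\varepsilon^{3}$ yields, for all $x>0$,
\[
x\,f'''(x)=(\alpha-2)\,f''(x)
\]
(equivalently: differentiate \eqref{FDE.stolyar} three times in $b$, set $b=a$, and use $\partial_bS_\alpha(a,b)|_{b=a}=\tfrac12$, $\partial_b^{2}S_\alpha(a,b)|_{b=a}=\tfrac{\alpha-2}{12a}$). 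For steps (c)--(d), put $g=f''$: then $xg'=(\alpha-2)g$ forces $g(x)=Kx^{\alpha-2}$, and integrating twice produces a three-parameter family. When $\alpha\notin\{0,1\}$ both antiderivatives are again powers and one gets $f(x)=c_1x^\alpha+c_2x+c_3$; when $\alpha=0$ a logarithm appears at the second integration and when $\alpha=1$ already at the first, so in both cases $f(x)=c_1x^\alpha\log x+c_2x+c_3$. Since the ODE is third-order linear with leading coefficient $x\ne0$ on $(0,\infty)$, its solution space is exactly three-dimensional, so it coincides with the displayed family; hence every solution of \eqref{FDE.stolyar} has the stated form.

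The main obstacle I anticipate is step (a): the hypothesis is merely differentiability, so the $C^3$-regularity must be genuinely earned, and the claim that $b\mapsto S_\alpha(c,b)$ exhausts $(c,\infty)$ with a smooth inverse requires a case analysis in $\alpha$ (the asymptotics of $S_\alpha(c,b)$ as $b\to\infty$ flip with the sign of $\alpha-1$). The other delicate point is purely computational: the $\varepsilon^{2}$-coefficient of $S_\alpha(a,a+\varepsilon)$ must be obtained exactly, since any error there changes the ODE and hence the conclusion.
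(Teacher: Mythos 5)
Your proposal is correct, and its skeleton (regularity bootstrap, reduction to the ODE $t f'''(t)=(\alpha-2)f''(t)$, then integration, with the $\alpha\in\{0,1\}$ split appearing only at the final integration) matches the paper's; but the way you arrive at the ODE is genuinely different, and simpler. The paper fixes $t$, uses the implicit function theorem to build a curve $h\mapsto g(h)$ along which the Stolarsky mean stays equal to a fixed value $x_0$, then differentiates the identity $f(h+g(h))-f(g(h))=hf'(x_0)$, multiplies by $1/h^2$, differentiates again, and passes to the limit $r\searrow 0$ via asymptotic expansions of three auxiliary quantities $R,S,T$ --- this is the ``novel trick'' the authors advertise. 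You instead Taylor-expand \eqref{FDE.stolyar} directly along the diagonal $b=a+\varepsilon$ using the exact expansion $S_\alpha(a,a+\varepsilon)=a+\tfrac{\varepsilon}{2}+\tfrac{\alpha-2}{24}\tfrac{\varepsilon^2}{a}+O(\varepsilon^3)$ (which checks out, including for $S_0$ and $S_1$) and read off the $\varepsilon^3$-coefficient; this yields exactly \eqref{simple.ODE} with much less computation and handles $\alpha\in\{0,1\}$ uniformly, whereas the paper leaves those cases ``to the reader.'' Your regularity step is also organized differently: the paper fixes the increment $h_0=b-a$ and applies the implicit function theorem purely locally from an explicit seed point, while you fix the left endpoint $c$ and invert $b\mapsto S_\alpha(c,b)$ globally on $(c,\infty)$. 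That works, but it is the one place where your sketch asserts rather than proves what it needs: strict positivity of $\partial_b S_\alpha(c,b)$ for $b>c$ (so that the inverse is smooth), which can be seen from $S_\alpha(c,b)^{\alpha-1}=\tfrac{1}{b-c}\int_c^b u^{\alpha-1}\,du$ together with the mean value theorem, and the surjectivity $S_\alpha(c,b)\to\infty$ as $b\to\infty$, which you correctly flag as a case analysis in $\alpha$; both statements are true, so this is a matter of writing out a routine verification, not a gap in the argument. Finally, the sufficiency half you include is not required by the theorem as stated (the paper dismisses it as easy to check), and your closing remark about the three-dimensionality of the ODE's solution space is redundant once you integrate $f''(x)=Kx^{\alpha-2}$ explicitly, but neither point affects correctness.
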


In particular, we have the following result for the case 
when the mean value in Lagrange's Mean Value Theorem 
corresponds to the geometric mean of endpoints.

\begin{corollary}
	Let $f\colon(0,\infty)\to\R$ be a differentiable function such that
	\begin{equation}\label{prop.hyperbola}
	f(b)-f(a)=(b-a)f'\bigl(\sqrt{ab}\bigr) 
	\quad \text{for all} \quad a,b>0.
	\end{equation}
	Then there exist constants 
	$c_1, c_2, c_3\in\R$ such that
	\begin{equation}\label{gm.sol.}
	f(x)=\frac{c_1}{x}+c_2x+c_3, \quad x>0.
	\end{equation}
\end{corollary}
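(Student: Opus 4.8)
The plan is to deduce the corollary directly from Theorem~\ref{thm:main}(i) by recognizing the geometric mean $\sqrt{ab}$ as the Stolarsky mean with parameter $\alpha=-1$. First I would verify the algebraic identity $S_{-1}(a,b)=\sqrt{ab}$ for all $a,b>0$. Substituting $\alpha=-1$ into the defining formula (so that the exponent is $\frac{1}{\alpha-1}=-\frac12$) and simplifying the inner fraction first, one gets
\begin{equation*}
S_{-1}(a,b)=\left(\frac{b^{-1}-a^{-1}}{-(b-a)}\right)^{-1/2}=\left(\frac{1}{ab}\right)^{-1/2}=\sqrt{ab},
\end{equation*}
while the case $a=b$ is the common value $a$ on both sides by the convention $S_\alpha(a,a):=a$. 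Hence equation~\eqref{prop.hyperbola} is precisely equation~\eqref{FDE.stolyar} with $\alpha=-1$.

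Next, since $-1\notin\{0,1\}$, I would invoke Theorem~\ref{thm:main}(i) with $\alpha=-1$: any differentiable $f\colon(0,\infty)\to\R$ satisfying \eqref{prop.hyperbola} must be of the form $f(x)=c_1x^{-1}+c_2x+c_3$ for some constants $c_1,c_2,c_3\in\R$, which is exactly \eqref{gm.sol.}. For completeness I would also record the converse direction (implicit in the theorem but worth a sentence): a direct differentiation shows that every such $f$ indeed satisfies \eqref{prop.hyperbola}, reflecting the geometric fact that the Lagrange mean value abscissa of the hyperbola $y=1/x$ on any interval $(a,b)$ is the geometric mean of the endpoints.

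There is essentially no obstacle here: the whole substance of the corollary is already contained in Theorem~\ref{thm:main}, and the only thing requiring care is the one-line identification $S_{-1}=$ geometric mean, in particular simplifying $\frac{b^{-1}-a^{-1}}{-(b-a)}=\frac{(a-b)/(ab)}{a-b}=\frac{1}{ab}$ before taking the power and keeping track that the outer exponent is $-\tfrac12$. If a self-contained argument were preferred, one could instead replay the proof of Theorem~\ref{thm:main}(i) verbatim with $\alpha=-1$, but this would gain nothing over the reduction above.
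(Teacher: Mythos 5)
Your proposal is correct and matches the paper's intended argument: the corollary is stated there as an immediate consequence of Theorem~\ref{thm:main}(i), obtained exactly by noting that $S_{-1}(a,b)=\sqrt{ab}$ and applying the theorem with $\alpha=-1$. Your explicit verification of the identity $S_{-1}(a,b)=\bigl(\tfrac{1}{ab}\bigr)^{-1/2}=\sqrt{ab}$ is accurate and fills in the one computation the paper leaves implicit.
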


It is easy to check that all the functions of the form 
\eqref{sol.case.1}-\eqref{sol.case.2} indeed satisfy~\eqref{FDE.stolyar}. 
To the best of our knowledge, the first and the only work in 
the current literature which considers \eqref{FDE.stolyar} 
is the paper~\cite{Matko99}. 
The purpose of this note is to provide a direct approach based on reducing 
the problem to elementary differential equations with a novel trick 
inspired by the methods of~\cite{BIM}.

\section{Proof of Theorem~\ref{thm:main}}
First we establish a preliminary result on the smoothness 
of the solutions of~\eqref{FDE.stolyar}.

\begin{lemma}\label{lem:key}
Let $\alpha\in\R$ be arbitrary. Every differentiable function 
$f:(0, \infty)\to\R$ satisfying \eqref{FDE.stolyar} is 
infinitely differentiable on $(0, \infty)$.
\end{lemma}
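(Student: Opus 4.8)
The plan is to use \eqref{FDE.stolyar} to express $f'$, near an arbitrary point, as an explicit combination of the values of $f$ alone, and then to bootstrap regularity by a straightforward induction. The mechanism is that, keeping one endpoint $a$ fixed, the equation determines $f'$ at the point $S_\alpha(a,b)$, and the map $b\mapsto S_\alpha(a,b)$ can be inverted smoothly.

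First I would collect the properties of $\psi_a:=S_\alpha(a,\cdot)$ that I need, for $a>0$ fixed: it is continuous and strictly increasing on $(0,\infty)$, with $\psi_a(a)=a$; it is real-analytic on $(0,\infty)\setminus\{a\}$ and $\psi_a'>0$ there; $\psi_a(b)\to\infty$ as $b\to\infty$; and $\ell_a:=\lim_{b\to0^+}\psi_a(b)$ exists with $\ell_a\le a$. The monotonicity, the bound $\ell_a\le a$, and the limit at $+\infty$ are the standard ordering properties of Stolarsky means together with a short computation from the defining formula; the cases $\alpha\in\{0,1\}$ are treated identically using the explicit formulas for $S_0$ and $S_1$. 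It follows that $\psi_a$ is an analytic diffeomorphism of $(0,\infty)$ onto $(\ell_a,\infty)$, with analytic inverse $\phi_a$ satisfying $\phi_a(t)=a$ only at $t=a$.

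Now fix $t_0>0$ and pick any $a$ with $0<a<t_0$. Then $t_0>a\ge\ell_a$, so $t_0$ lies in the open domain of $\phi_a$ and $\phi_a(t_0)\neq a$. Substituting $b=\phi_a(t)$ into \eqref{FDE.stolyar} yields, for all $t$ in a neighbourhood of $t_0$,
\[
f'(t)=\frac{f(\phi_a(t))-f(a)}{\phi_a(t)-a},
\]
the denominator being analytic and nonzero near $t_0$. From this I would prove by induction that $f\in C^k(0,\infty)$ for every $k\ge0$: for $k=0$ this is just the continuity of the differentiable function $f$; and if $f\in C^k(0,\infty)$, then the right-hand side above is a $C^k$ function on a neighbourhood of $t_0$ (a composition of $f$ with the analytic map $\phi_a$, divided by a nonvanishing analytic function), so $f'$ is $C^k$ near $t_0$, and since $t_0$ was arbitrary, $f'\in C^k(0,\infty)$, i.e.\ $f\in C^{k+1}(0,\infty)$. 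As this holds for every $k$, $f$ is infinitely differentiable.

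The only real work is in the second paragraph. The crucial—and slightly delicate—point is that $\psi_a'$ does not vanish on $(0,\infty)\setminus\{a\}$, so that $\phi_a$ is a genuine analytic inverse rather than merely a continuous one; I would verify this directly from $\psi_a(b)^{\alpha-1}=\dfrac{b^{\alpha}-a^{\alpha}}{\alpha(b-a)}$, where differentiating reduces the claim to the elementary fact that $r\mapsto(1-\alpha)r^{\alpha}+\alpha r^{\alpha-1}-1$ vanishes only at $r=1$ on $(0,\infty)$, which holds because this function has its unique critical point at $r=1$, where it equals $0$. I would also point out that the argument never uses any regularity of $S_\alpha$ across the diagonal $a=b$, since the substituted endpoint $b=\phi_a(t)$ stays away from $a$ throughout.
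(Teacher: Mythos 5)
Your argument is correct, and it reaches the paper's key identity-plus-bootstrap structure by a somewhat different route. The paper fixes the \emph{gap} $h_0=b-a$ and the target abscissa $x_0$, and applies the implicit function theorem to $\psi(x,y)=(y+h_0)^{\alpha}-y^{\alpha}-\alpha h_0x^{\alpha-1}$ to produce a locally defined smooth left endpoint $\phi(x)$ with $S_\alpha(\phi(x),\phi(x)+h_0)=x$, giving $f(h_0+\phi(x))-f(\phi(x))=h_0f'(x)$; the nondegeneracy check there is the one-line observation that $(y+h_0)^{\alpha-1}\neq y^{\alpha-1}$. You instead fix an \emph{endpoint} $a$ and invert $b\mapsto S_\alpha(a,b)$ globally, which requires the extra (but elementary) verification that this map is strictly monotone with nonvanishing derivative off the diagonal — your reduction to the fact that $(1-\alpha)r^{\alpha}+\alpha r^{\alpha-1}-1$ vanishes only at $r=1$ is exactly right, and your remark that no regularity of $S_\alpha$ across $a=b$ is used is the correct safeguard (so the phrase ``analytic diffeomorphism of $(0,\infty)$'' should be read as analyticity away from $b=a$, which is all you use). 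What your version buys is a cleaner closed formula $f'(t)=\bigl(f(\phi_a(t))-f(a)\bigr)/\bigl(\phi_a(t)-a\bigr)$ involving a single composition of $f$, and a setup that treats the whole half-line at once rather than a neighborhood of each point; what the paper's version buys is that the nondegeneracy needed for the implicit function theorem is immediate, with no discussion of ranges, limits $\ell_a$, or monotonicity of the Stolarsky mean. Both proofs then run the same induction ($f\in C^k\Rightarrow f'\in C^k$), and both defer the routine modifications for $\alpha\in\{0,1\}$; in your case it would be worth one sentence recording the analogues $\log r-1+1/r>0$ and $r-1-\log r>0$ for $r\neq1$ that replace the displayed function in those cases.
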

\begin{proof}
We provide a detailed proof for the case $\alpha\notin\{0,1\}$.
The analyses of the cases $\alpha=0$ and $\alpha=1$ are 
completely similar and are left to the interested reader.
Let us take arbitrary $x_0>0$. Below we show infinite 
differentiability of $f$ in a neighborhood of $x_0$.
It is easy to see that there are $y_0>0$ and $h_0>0$ 
such that 
$(y_0+h_0)^{\alpha}-y_0^{\alpha}=\alpha h_0 x_0^{\alpha-1}$.
For example, we can take 
\begin{equation*}
y_0=h_0=\Bigl(\frac{2^\alpha-1}{\alpha}\Bigr)^{\frac{1}{1-\alpha}}x_0.
\end{equation*}
Next, we consider the function 
\begin{equation*}
\psi(x,y)=(y+h_0)^{\alpha}-y^{\alpha}-\alpha h_0 x^{\alpha-1}.
\end{equation*}
We have $\psi(x_0,y_0)=0$. Clearly, $\psi$ is continuously 
differentiable with 
\[
\frac{\partial}{\partial y}\psi(x,y)
	=\alpha(y+h_0)^{\alpha-1}-\alpha y^{\alpha-1}
\]
which does not vanish since $\alpha\neq1$ and $\alpha\neq0$. 
By the implicit 
function theorem there is a neighborhood $U$ of $x_0$ and 
a unique continuously differentiable function 
$\phi:U\to(0,\infty)$ such that $\phi(x_0)=y_0$ and 
$\psi(x,\phi(x))=0$ for all $x\in U$.
Furthermore,

\begin{equation}\label{PD.phi.x}
\phi'(x)=\frac{(\alpha-1)h_0x^{\alpha-2}}{(\phi(x)+h_0)^{\alpha-1}-\phi(x)^{\alpha-1}},
\quad x\in U.
\end{equation}
Inserting $\phi$ and $\phi+h_0$ in \eqref{FDE.stolyar} 
for $a$ and $b$, respectively, we get
\begin{equation}\label{eqn.inserted}
f(h_0+\phi(x))-f(\phi(x))=h_0f'(x), \quad x\in U.
\end{equation}
It follows from \eqref{PD.phi.x} that $\phi$ is infinitely 
differentiable in $U$. 
Hence, if we assume that $f$ is $k\ge 1$ times differentiable, 
then \eqref{eqn.inserted} implies its $k+1$ times differentiability 
in $U$. By induction, it follows that $f$ is infinitely 
differentiable in the neighborhood $U$ of $x_0$. 
\end{proof}

\noindent
\textit{Proof of Theorem~\emph{\ref{thm:main}}}.
First let $\alpha\notin\{0,1\}$.
Let $t>0$ be fixed arbitrarily.
For every $r>0$, there is $x_0>0$ such that 
$(t+r)^{\alpha}-t^{\alpha}=\alpha r x_0^{\alpha-1}$.
Next, we consider the function 
\begin{equation}\label{Psi}
\Psi(h,y)=(y+h)^{\alpha}-y^{\alpha}-\alpha h x_0^{\alpha-1}
\end{equation}
on $(0,\infty)^2$. We have $\Psi(r,t)=0$. It is easy to check that the implicit 
function theorem applies and yields the existence of a 
neighborhood $V$ of $r$, and a unique continuously 
differentiable function $g\colon V\to(0,\infty)$ such 
that $g(r)=t$ and $\Psi(h,g(h))=0$ for all $h\in V$. 
Moreover, we have
\begin{equation}\label{PD.g.h}
g'(h)
	=\frac{1}{\alpha h}\frac{(g(h)+h)^\alpha-g(h)^\alpha-\alpha h(g(h)+h)^{\alpha-1}}{(g(h)+h)^{\alpha-1}-g(h)^{\alpha-1}}.
\end{equation}
By inserting $g$ and $h+g$ in \eqref{FDE.stolyar} 
for $a$ and $b$, respectively, and taking into account the 
fact that $\Psi(h,g(h))=0$ for all $h\in V$, we get
\begin{equation}\label{eqn.inserted.2}
f(h+g(h))-f(g(h))=hf'(x_0), \quad h\in V.
\end{equation}
Next, let us differentiate \eqref{eqn.inserted.2} 
with respect to $h$ to get
\begin{equation}\label{eqn.diff.1}
f'(h+g(h))+f'(h+g(h)g'(h)-f'(g(h))g'(h)=f'(x_0).
\end{equation}
If we multiply \eqref{eqn.diff.1} by $\frac{1}{h^2}$ and 
differentiate the resulting identity with respect to $h$, 
and insert \eqref{PD.g.h}, then for $h=r$, we obtain, 
\begin{equation}\label{eqn.func.1}
\frac{1}{\alpha r}\Bigl(f'(r+t)-f'(t)\Bigr) = 
	\frac{R(r,t)}{S(r,t)}+(f'(x_0)-f'(t))\frac{T(r,t)}{S(r,t)},
\end{equation}
for all $r>0$, where
\begin{equation*}
\begin{aligned}
R(r,t):=&\frac{f''(r+t)}{r^3}\phi(r,t)^2
		-\frac{f''(t)}{r^3}\psi(r,t)^2\\[2ex]
S(r,t):=&\frac{\alpha}{r^3}\bigl((r+t)^{\alpha-1}-t^{\alpha-1}\bigr)\phi(r,t)
			-\frac{\alpha(\alpha-1)}{r^2}t^{\alpha-2}\psi(r,t)\\[2ex]
T(r,t):=&\frac{\alpha-1}{r^3}\Big((r+t)^{\alpha-2}\phi(r,t)-t^{\alpha-2}\psi(r,t)\Big)
			-\frac{1}{r^4}\bigl((r+t)^{\alpha-1}-t^{\alpha-1}\bigr)
\end{aligned}
\end{equation*}
with
\begin{equation*}
\phi(r,t):=\frac{(r+t)^{\alpha}-t^{\alpha}}{\alpha r}-t^{\alpha-1}, 
\quad 	
\psi(r,t):=\frac{(r+t)^{\alpha}-t^{\alpha}}{\alpha r}-(r+t)^{\alpha-1}.
\end{equation*}
It is not difficult to check that the following asymptotic 
expansions hold 
%
\begin{equation*}
\begin{aligned}
R(r,t)&=\frac{(\alpha-1)^2}{4}t^{2\alpha-4}f'''(t)
-\frac{(\alpha-1)^2(\alpha-2)}{6}t^{2\alpha-5}f''(t) + {\text{o}}(1),\\
S(r,t)&=\frac{\alpha(\alpha-1)^2(\alpha-2)}{12}t^{2\alpha-5} + {\text{o}}(1),\\
T(r,t)&=-\frac{(\alpha-1)^2(\alpha-2)t^{2\alpha-6}}{12} + {\text{o}}(1),
\end{aligned}
\end{equation*}
as $r\searrow0$. Furthermore, from the equation $\Psi(r,t)=0$, we get
\begin{equation*}
x_0=\bigg(\frac{(t+r)^{\alpha}-t^\alpha}{\alpha r}\bigg)^{\frac{1}{\alpha-1}}=t+{\text{o}}(1), \quad \text{as} \quad r\searrow0.
\end{equation*}
In view of these asymptotic expansions, letting $r\searrow0$ 
in \eqref{eqn.func.1}, we deduce the elementary differential 
equation
\begin{equation}\label{simple.ODE}
f'''(t)-\frac{\alpha-2}{t}f''(t)=0.
\end{equation}
Since $t>0$ was fixed arbitrarily, \eqref{simple.ODE} must 
hold for all $t>0$. Taking into account the hypothesis that
$\alpha(\alpha-1)\neq 0$, it follows immediately that 
the solution space of the differential equation in \eqref{simple.ODE} 
is given by $\text{span}\{1,t,t^{\alpha}\}$, completing the proof 
of part (i). 

For the cases $\alpha=0$ and $\alpha=1$, in the  
same way as above one obtains the differential equation 
\eqref{simple.ODE} (for the corresponding values of $\alpha$), 
the solution spaces of which are $\text{span}\{1,t,\log t\}$ 
and $\text{span}\{1,t,t\log t\}$, respectively.
\qed

\medskip
\subsection*{Acknowledgements} 
The authors are grateful to Diyora Salimova for reading a preliminary version of this note and 
making some important comments.

%

\begin{thebibliography}{1}

\bibitem{Aczel}
J.~Acz{\'e}l, \textit{A mean value property of the derivative of quadratic polynomials - without mean values and derivatives.} Math. Mag. \textbf{58} (1985), No. 1, 42--45. 
%
\bibitem{BIM}
Z.~M.~Balogh, O.~O.~Ibrogimov and B.~S.~Mityagin, \textit{Functional equations and the Cauchy mean value theorem.} Aequationes Math. \textbf{90} (2016), No. 4, 683--697.
%
\bibitem{Ebanks}
B.~Ebanks, \textit{Generalized Cauchy difference equations II.} Proc. Amer. Math. Soc. \textbf{136} (2008), No. 11, 3911--3919.
%
\bibitem{Haruki}
S.~Haruki, \textit{A property of quadratic polynomials.} Amer. Math. Monthly \textbf{86} (1979), No. 7, 577--579. 
%
\bibitem{Kannappan}
P.~Kannappan, \textit{Rudin's problem on groups and a generalization of mean value theorem.} Aequationes Math. \textbf{65} (2003), No. 1-2, 82--92. 
%
\bibitem{Matko99}
J.~Matkowski, \textit{Mean value property and associated functional equations.} Aequationes Math. \textbf{58} (1999), No.~1-2, 46--59.
%
\bibitem{Pales}
Z.~P{\'a}les, \textit{On the equality of quasi-arithmetic and Lagrangian means.} J. Math. Anal. Appl. \textbf{382} (2011), 86--96.
%
\bibitem{Sablik}
M.~Sablik, \textit{Taylor's theorem and functional equations.} Aequationes Math. \textbf{60} (2000), No. 3, 258--267. 
%
\bibitem{Sahoo-Riedel}
P.~K.~Sahoo, T. Riedel, \textit{Mean value theorems and functional equations.}  World Scieintific Publishing Co. Inc., River Edge, NJ, 1998.
%
\bibitem{Stolarsky}
K.~B.~Stolarsky, \textit{Generalizations of the Logarithmic Mean.} Math. Magazine \textbf{48} (1975), No.~2, 87--92.
%
%
\end{thebibliography}
\end{document}